\documentclass[12pt]{amsart}
\usepackage{xcolor}
\pagecolor{black}
\color{white}
\usepackage[T1]{fontenc}
\usepackage[utf8]{inputenc}
\usepackage{color,amssymb,amsmath,mathrsfs, enumerate,esint}
\usepackage{a4wide}
\usepackage{hyperref}
\usepackage{pgf,tikz,pgfplots}
\pgfplotsset{compat=1.15}
\usepackage{mathrsfs}
\usepackage{cite}
\usetikzlibrary{arrows}
\theoremstyle{plain}
\newtheorem{Theorem}{Theorem}[section]
\newtheorem{Lemma}[Theorem]{Lemma}

\newtheorem{Example}[Theorem]{Example}
\theoremstyle{definition}

\newtheorem{Definition}{Definition}[section]
\newtheorem{Pro}{Problem}

\DeclareMathOperator{\er}{\mathbb{R}}

\DeclareMathOperator{\supp}{supp}
\title{Higher order pointwise estimates for intermediate derivative}

\author{Miroslav Hor\'ak}

\author{Filip Soudsk\'y}

\address{F.~Soudsk\'y: University of Chemistry and Technology, Prague, Technick\'a 5, 166 28 Praha 6}
\email{soudskyf@vscht.cz}

\author{Martina \v Sim\r unkov\'a}
\address{M.~\v Sim\r unkov\'a: Technical University of Liberec, Liberec, Studentsk\'a 1402/2, 461 17 Liberec, Czech Rebublic}
\email{martina.simunkova@tul.cz}

\address{M. Horák: Technical University of Liberec, Liberec, Studentská 1402/2, 461 17 Liberec, Czech Rebublic}
\email{miroslav.horak1@tul.cz}

\begin{document}
\maketitle
\begin{abstract}
    The estimates of intermediate derivative play important role in the theory of partial differential equations. The modern approach to this problem is via pointwise estimates, since this allows to obtain results for various function norms. The paper aims to extend the pointwise estimate for intermediate derivative in terms of the averaging operator for higher orders derivatives.
\end{abstract}
\section{Introduction}
The interpolation inequalities have been used as a key instrument in partial differential equations and mathematical modelling (see for instance, \cite{Lady} or \cite{N}). One of the classical examples of this type of inequality is the famous Gagliardo-Nirenberg inequality. The first special case of this inequality can be traced back to 1913 (see \cite{Landau1913}). Later, the multidimensional version was first introduced by Kolmogorov in 1937. The special cases were also studied by John Nash \cite{N} and Olga Ladyzenskaya\cite{Lady} both of the works are using this inequality as an instrument for studiing existence of solution for certain PDEs. The general result for Lebesgue spaces was finally established independently in \cite{Ni} and \cite{Gag}. Since more general type of spaces such as Orlicz spaces, Lorentz spaces etc. appeared, the more general form of the inequality was needed. Since the classical methods used by Gagliardo and Nirenberg fail to achieve estimates for more general norms a new tool was needed. Pointwise inequalities proved to be the most fruitful approach to the problem. The initial result in this field was achieved by Agnieszka Kalamajska in 1993 (see \cite{Kal}). However, the estimate was achieved only in term of the full gradient inequality. The results for pure partial derivatives cannot be reconstructed since the approach requires potential estimates (this is a principle problem as the counter-example in our paper shows). Since the maximal operators are involved in the result, we need to have its boundedness on the function spaces we are working on. The problem was fixed in \cite{LRS2}. The authors replaced the maximal operator by an averaging operator over a family of set with bounded overlapping (designed for each function individually but with an uniform constant of overlaps). A similar approach is known from several results in harmonic analysis as a sparse domination approach. Such an operator has the key advantage, which is the boundedness on arbitrary r.i. Banach function space. The estimate also works for pure partial derivative. However, the pointwise result was given only for the combination of the first and second order derivative (the norm results of higher order were achieved using double induction). 

In our paper we are going to present the pointwise inequality covering  some of the higher order derivatives. We also show why the estimate of the pure derivative by the maximal operator is impossible to achieve. We also formulate some open problems in the end of the paper.

\section{Main results}
Our main result is the following extension of the result obtained in \cite{LRS2} and reads as follows.
\begin{Theorem}\label{MR}
Let $0<j$ be a natural number. There exists a constant $C=C(j)$, such that for all $u\in \mathcal{C}^{j+1}_c(\er^n)$ there exists a family of sets $\mathcal{P}=\mathcal{P}(u)$ such that the following inequalities hold
\begin{equation}
    \left|\frac{\partial^j u}{\partial x_i^j}\right|\leq C \left(A_{\mathcal{P}}\left(\frac{\partial^{j+1} u}{\partial x_i^{j+1}}\right)\right)^{\frac{j}{j+1}}(A_{\mathcal{P}}u)^{\frac{1}{j+1}}
\end{equation}
and
$$
\sum_{A\in\mathcal{P}}\chi_A\leq 5.
$$
\end{Theorem}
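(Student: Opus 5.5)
The plan is to reduce to a one–dimensional statement along lines parallel to the $x_i$–axis, prove a local inequality on intervals there, pick intervals by a balancing (stopping) rule, and finally extract a subfamily with bounded overlap. Throughout, $A_{\mathcal P}$ is understood as $A_{\mathcal P}f=\sum_{A\in\mathcal P}\chi_A\,\frac{1}{|A|}\int_A|f|$, with averages taken along the $x_i$–direction. Fix $x'=(x_1,\dots,\widehat{x_i},\dots,x_n)$ and put $f(t)=u(x_1,\dots,t,\dots,x_n)$. Then $f\in\mathcal C^{j+1}_c(\er)$ and $f^{(k)}=\partial^k u/\partial x_i^k$ on this line. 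The sets of $\mathcal P$ will be segments $I\times\{x'\}$, or measurable unions of such segments over $x'$.

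\emph{Step 1 (local inequality).} I will show that for every interval $I$ of length $h$ and every $x\in I$,
\begin{equation*}
|f^{(j)}(x)|\le C\Bigl(h^{-j}\,\frac1h\int_I|f|+h\,\frac1h\int_I|f^{(j+1)}|\Bigr).
\end{equation*}
Fix a bump $\psi\ge0$ supported in $I$ with $\int\psi=1$ and $|\psi^{(j)}|\le C h^{-j-1}$. Write
\begin{equation*}
f^{(j)}(x)=\int\psi(y)\,f^{(j)}(y)\,dy+\int\psi(y)\bigl(f^{(j)}(x)-f^{(j)}(y)\bigr)\,dy .
\end{equation*}
In the first term, integrating by parts $j$ times gives $\bigl|\int\psi f^{(j)}\bigr|=\bigl|\int\psi^{(j)}f\bigr|\le Ch^{-j}\frac1h\int_I|f|$. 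In the second term, $|f^{(j)}(x)-f^{(j)}(y)|\le\int_I|f^{(j+1)}|$ for $x,y\in I$.

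\emph{Step 2 (balancing).} For $x$ with $f^{(j)}(x)\ne0$, consider intervals $I_h(x)$ of length $h$ containing $x$, for instance centred at $x$. Define
\begin{equation*}
a(h)=h^{-j}\,\frac1h\int_{I_h}|f|,\qquad b(h)=\int_{I_h}|f^{(j+1)}| .
\end{equation*}
Both are continuous in $h$. By compact support, $a(h)=O(h^{-j-1})\to0$ as $h\to\infty$, while $b(h)$ tends to a positive constant (it is positive because $f^{(j)}\not\equiv0$). As $h\to0$, either $a(h)\to\infty$ or $f$ vanishes near $x$. In the latter case, Step 1 with small $h$ forces $f^{(j)}(x)=0$, so this case does not occur. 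Hence there is $h$ with $a(h)=b(h)$. For that $h$, Step 1 gives
\begin{equation*}
|f^{(j)}(x)|\le 2C\,b(h)=2C\,a(h)^{\frac1{j+1}}\,b(h)^{\frac{j}{j+1}} .
\end{equation*}
Now $a(h)^{\frac1{j+1}}\,b(h)^{\frac j{j+1}}$ equals
\begin{equation*}
h^{-\frac{j+1}{j+1}}\Bigl(\frac1h\int|f|\Bigr)^{\frac1{j+1}}\,h^{\frac j{j+1}}\Bigl(\frac1h\int|f^{(j+1)}|\Bigr)^{\frac j{j+1}}\cdot h^{\frac{j}{j+1}\cdot0},
\end{equation*}
and the powers of $h$ cancel, since $b=h\cdot(\text{average of }|f^{(j+1)}|)$ and $a=h^{-j}\cdot(\text{average of }|f|)$. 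So $|f^{(j)}(x)|$ is bounded by $C$ times the product of averages over $I_h(x)$ required in Theorem~\ref{MR}.

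\emph{Step 3 (selection and measurability).} On each line, the balanced intervals cover the compact set $\{f^{(j)}\ne0\}$. A one–dimensional Vitali/Besicovitch-type selection extracts a subfamily covering it with overlap at most $2$ (a few more if endpoints are handled crudely; the constant $5$ leaves room). For each $x$, the selected interval containing $x$ is balanced. However, it is balanced for its own centre, not for $x$. This is acceptable, because Step 1 holds for \emph{any} $x\in I$: the equality $a=b$ is a property of the interval alone, so the estimate transfers to every point of a balanced interval. I therefore only need a covering by balanced intervals, not intervals centred at $x$.

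I expect the main obstacle to be assembling these line-by-line families into a single family $\mathcal P$ of measurable subsets of $\er^n$. Two things must be ensured: the choice of $h$ and of the subcover must depend measurably on $x'$, and the overlap bound must survive. I would try to make the choice canonical, e.g.\ take the smallest balancing $h$ and a greedy selection from the left. I would then group intervals into sets $A$ that are unions over $x'$ of single segments, with at most $5$ such groups meeting any point, so that $\sum_{A\in\mathcal P}\chi_A\le5$.
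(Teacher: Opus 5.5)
\textbf{There is a genuine gap: your proposal does not prove the theorem for the operator it is about.}

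\emph{Steps 1--3 reach only the one-dimensional statement.} Your route to it is sound and differs from the paper's: you use a mollified local inequality, balancing of scales and a Besicovitch selection. The paper's Lemma~\ref{ODE} instead uses dyadic level sets of $|u^{(j)}|$, maximal intervals on which $|u^{(j)}|$ stays comparable, Lemma~\ref{ODL}, and the jump of $|u^{(j)}|$ at an endpoint. But Theorem~\ref{MR} concerns the operator of Definition~\ref{Sparse}. There $\frac{1}{|A|}\int_A$ is an $n$-dimensional Lebesgue average over sets $A\subset\mathbb{R}^n$ of positive finite measure. Segments $I\times\{x'\}$ are null sets. Once you reinterpret $A_{\mathcal P}$ as averaging along the $x_i$-direction, the statement is just Lemma~\ref{ODE} applied on every line, and the real content of the theorem is never reached. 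There are also minor slips. $\{f^{(j)}\neq0\}$ is open, not compact. The dichotomy ``$a(h)\to\infty$ or $f$ vanishes near $x$'' is false: $f(t)=(t-x)^j$ near $x$ gives $a(h)\equiv 2^{-j}/(j+1)$. A balancing $h$ still exists, because $b(h)\to0$ while Step 1 gives $\liminf_{h\to0}a(h)\ge|f^{(j)}(x)|/C>0$.

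\emph{The repair you sketch fails as stated, and the obstacle is not only measurability.} You propose collecting segments into sets that are unions over $x'$, with at most $5$ sets through any point. For such an $A$, $\fint_A$ mixes different fibers. So at $x\in A$ the product $\bigl(\fint_A|\partial^{j+1}_{x_i}u|\bigr)^{j/(j+1)}\bigl(\fint_A|u|\bigr)^{1/(j+1)}$ can be far smaller than $|\partial^j_{x_i}u(x)|$. This happens whenever $A$ also contains many fibers on which $u$, $\partial^j_{x_i}u$ and $\partial^{j+1}_{x_i}u$ are tiny.

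\emph{The missing idea is grouping by a dyadic level plus a H\"older step.} Group so that all fibers in one set carry comparable size. Then use H\"older, i.e.\ concavity of $(s,t)\mapsto s^{j/(j+1)}t^{1/(j+1)}$, to bound the product of the $n$-dimensional averages from below by a weighted mean of the fiberwise products, each $\gtrsim 2^k$.

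The paper does this with $E_k=\{2^{k-1}\le|\partial^j_{x_1}u|<2^k\}$:
\begin{itemize}
\item it thickens each fiber interval by $B(\bar x,\delta_k)$, where uniform continuity keeps the one-dimensional bound on nearby fibers, giving \eqref{onedimodhadinz};
\item it takes the open, hence measurable, sets $R_k$;
\item the overlap bound $5$ comes from Lemma~\ref{CoV}, and the lower bound $\gtrsim2^k$ from \eqref{ODHAD}.
\end{itemize}

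In your framework the analogous step would be to put into one set $G_k$ all selected segments whose balanced value $b(I)$ lies in $[2^{k-1},2^k)$. Note that $b(I)$ is exactly the fiberwise product of averages when $a=b$. Then:
\begin{itemize}
\item $\sum_k\chi_{G_k}$ is bounded by the fiberwise overlap;
\item $|\partial^j_{x_i}u|\le 2C\,b(I)<2C\,2^k$ on $G_k$;
\item H\"older gives that the product of averages over $G_k$ is $\gtrsim2^k$.
\end{itemize}
You would still have to show that the fiberwise Besicovitch selection can be made measurable in $x'$, with $0<|G_k|<\infty$. Your proposal leaves this open, and the paper's open cylinders avoid it altogether.
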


We also show that the result obtained by A. Kalamajska concerning the bounds for intermediate gradient cannot be obtained for pure partial derivatives. Moreover, instead of using the Hardy–Littlewood maximal operator, we will use the Sparse averaging operator (see \ref{Sparse} for precise definition)

\section{Preliminary results}

\begin{Definition}\label{Sparse}
    Let \(\mathcal{S}\) be a system of measurable sets of finite measure and let \(f\) be a locally integrable function on \(\er^n\). Define the mean value operator over \(\mathcal{S}\) by
\[
A_\mathcal{S}f(x):=\sum_{A\in\mathcal{S}}\chi_A(x)\fint_A |f(s)|\,ds = \sum_{A\in\mathcal{S}}\frac{1}{|A|}\chi_A(x)\int_A |f(s)|\,ds.
\]

\end{Definition}

A classical theorem from \cite{BS} states that for a disjoint system of measurable sets $\mathcal{S}$ an arbitrary Banach function space one has
$$
\|A_{\mathcal{S}}\|_{X\to X}=1
$$

This result may be easily extended for sets with controlled overlaps. Let us state this in the more precise form. In the following text we shall use notation
\begin{equation}\label{Levelset}
\{f>\alpha\}:=\{x\in\er^n: f(x)>\alpha\}   
\end{equation}
for level sets of a functions, with obvious modifications for $\{f\geq\alpha\}, \{f\leq \alpha\}, \{f<\alpha\}$.
\begin{Theorem}
Let $X$ be an arbitrary r.i. Banach function space. And let $\mathcal{S}$ be a system of intervals for which
\begin{equation}\label{OC}
    \sum_{A\in\mathcal{S}}\chi_A(x)\leq C
\end{equation}
Then we have
\begin{equation}
\|A_{\mathcal{S}}u\|_X\leq C\|u\|_X    
\end{equation}

\end{Theorem}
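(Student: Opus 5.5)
The natural plan is to reduce to the disjoint case, quoted above from \cite{BS}, by splitting $\mathcal{S}$ into at most $C$ disjoint subfamilies. Then $A_{\mathcal{S}}=\sum_{k=1}^{C}A_{\mathcal{S}_k}$, and the triangle inequality in $X$ gives $\|A_{\mathcal{S}}u\|_X\leq\sum_k\|A_{\mathcal{S}_k}u\|_X\leq C\|u\|_X$. The substance is therefore a combinatorial decomposition lemma: a family of intervals with overlap at most $C$ splits into $C$ pairwise disjoint subfamilies. I read ``intervals'' as subintervals of $\er$; for cubes or boxes in $\er^n$ the argument below breaks down, and I comment on that at the end.

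\textbf{Step 1 (colouring).} For intervals on the line, the overlap condition \eqref{OC} says exactly that every point lies in at most $C$ members of $\mathcal{S}$. So the clique number of the interval graph is at most $C$. Interval graphs are perfect, and for them this is a classical greedy fact, so the intersection graph is $C$-colourable. Concretely, order the intervals by left endpoint and assign each one the smallest colour not used by a previously coloured interval that meets it. The previously coloured intervals meeting $I$ all contain the left endpoint of $I$. Together with $I$ itself there are at most $C$ of them at that point, so at most $C-1$ colours are blocked.

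If $\mathcal{S}$ is countable but not well ordered by left endpoints, I would do the colouring for every finite subfamily and pass to the limit by a compactness (K\"onig/Tychonoff) argument on the colourings. Alternatively, one can prove the norm inequality for finite subfamilies and pass to the limit using the Fatou property of $X$, which is the version I would actually write. Since the sets have positive finite measure and bounded overlap, only countably many of them matter.

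\textbf{Step 2 (the disjoint case, with care).} The result from \cite{BS} gives $\|A_{\mathcal{S}_k}\|_{X\to X}\leq 1$ for a disjoint family $\mathcal{S}_k$. The operator there is the averaging operator $E$, which on $\bigcup\mathcal{S}_k$ replaces $|u|$ by its averages and is zero elsewhere. Our $A_{\mathcal{S}_k}u$ is exactly $E|u|$, and $\||u|\|_X=\|u\|_X$, so the bound applies directly. If one insists on a self-contained proof, I would use Hardy--Littlewood--P\'olya: $E|u|$ is majorised by $|u|$, that is $\int_0^t(E|u|)^*\leq\int_0^t |u|^*$. Then the Calder\'on--Luxemburg property of r.i.\ spaces, exact for the norm via the K\"othe dual representation $\|f\|_X=\sup\{\int f^*g^*: \|g\|_{X'}\leq 1\}$, yields $\|E|u|\|_X\leq\|u\|_X$.

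\textbf{Main obstacle.} The only nontrivial step is Step 1, namely the colouring of an infinite family. If the statement is meant for general sets, for instance the multidimensional rectangles later used in $\mathcal{P}$, then $C$ colours may fail and the constant worsens. In that case I would abandon colouring and argue directly by duality. Write $A_{\mathcal{S}}u=\sum_A \chi_A\fint_A|u|$ and pair it with $g\in X'$, using $\int A_{\mathcal{S}}u\,g=\int |u|\,A_{\mathcal{S}}g$, which holds by the self-adjointness of averaging. This reduces matters to showing that $A_{\mathcal{S}}$ is bounded on $L^1$ and $L^\infty$ with constant $C$. Both are immediate from \eqref{OC}:
$\|A_{\mathcal{S}}f\|_1=\sum_A\int_A|f|\leq C\|f\|_1$ and $\|A_{\mathcal{S}}f\|_\infty\leq C\|f\|_\infty$.
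The bound on $X$ then follows from the Calder\'on interpolation theorem for r.i.\ spaces. Without recentring, that theorem might only give a constant $c\,C$ with a universal $c$; for exactly $C$ one needs the exact version for $\|\cdot\|_{L^1\cap L^\infty}$-admissible operators. This is the version I would cite.
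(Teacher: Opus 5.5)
Your argument is correct, but your main route is not the paper's; only your fallback is. The paper notes that $\|A_{\mathcal S}f\|_1\le C\|f\|_1$ and $\|A_{\mathcal S}f\|_\infty\le C\|f\|_\infty$, and then interpolates between $L^1$ and $L^\infty$, citing \cite[Lemma 3.2]{LRS2}. That argument uses nothing about the geometry of the sets, so it covers arbitrary measurable sets of finite positive measure. This generality is what is actually needed: the sets $R_k$ in the proof of Theorem \ref{MR} are unions of cylinders, not intervals, and only satisfy \eqref{OLC}.

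Your colouring argument instead reduces everything to the disjoint contraction from \cite{BS} and the triangle inequality. What it buys is that it avoids Calder\'on's theorem entirely. What it costs is that it is inherently one-dimensional. It relies on interval graphs being $C$-colourable when the overlap is $C$, and this already fails for rectangles in $\er^2$: five rectangles whose intersection graph is a $5$-cycle have overlap $2$ but need three colours. It also needs the finite-subfamily/Fatou limiting step, which you handle correctly.

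Two small points.
\begin{enumerate}
\item In the greedy step, open intervals with a common left endpoint do not contain that endpoint. Take the common point slightly to its right instead; this also makes the argument work if \eqref{OC} holds only a.e.
\item In the fallback, the duality pairing is superfluous. Moreover, $\int A_{\mathcal S}u\,g=\int|u|\,A_{\mathcal S}g$ holds only for $g\ge0$; in general it is only an inequality. Apply Calder\'on's theorem to the linear operator $f\mapsto\sum_A\chi_A\fint_A f$ evaluated at $|u|$. This gives $K(A_{\mathcal S}u,t;L^1,L^\infty)\le C\,K(u,t;L^1,L^\infty)$, i.e.\ $\int_0^t(A_{\mathcal S}u)^*(s)\,ds\le C\int_0^t u^*(s)\,ds$. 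Hence the constant on every r.i.\ space is exactly $C$, and your concern about a weaker constant $cC$ does not arise.
\end{enumerate}
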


The proof of this theorem follows easily from the fact that operator is bounded both on $L^1$ and $L^\infty$. For details see \cite[Lemma 3.2]{LRS2}.

\begin{Lemma}[An estimate by a lower derivative]\label{ODL}
    Let $k\in\mathbb{N}$ and $I=[a,b]$ be a bounded interval. Then there exists a constant $C=C(k)$ such that
    for every $f\in \mathcal{C}^k(a,b)\cap\mathcal{C}([a,b])$ it holds 
    \begin{equation}\label{DO}
        (b-a)^{k+1}\inf_{\tau\in (a,b)}|f^{(k)}(\tau)|\leq C\int_a^b |f(s)|ds.
    \end{equation}
\end{Lemma}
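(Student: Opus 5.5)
The plan is to reduce the estimate to the unit interval by scaling, and then to control the infimum of $|f^{(k)}|$ through the coefficients of an interpolating polynomial. For the scaling, put $g(t)=f(a+t(b-a))$ for $t\in[0,1]$. Then
$$g^{(k)}(t)=(b-a)^k f^{(k)}(a+t(b-a)), \qquad \int_0^1|g|=(b-a)^{-1}\int_a^b|f|.$$
After this substitution, \eqref{DO} is equivalent to the statement with $a=0$ and $b=1$, with the same constant. So it suffices to prove
$$\inf_{(0,1)}|g^{(k)}|\le C(k)\int_0^1|g|$$
for every $g\in\mathcal{C}^k(0,1)\cap\mathcal{C}([0,1])$.

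The main step is to choose $k+1$ interpolation nodes at which $|g|$ is controlled by its mean. Split $[0,1]$ into $k+1$ consecutive subintervals $J_0,\dots,J_k$, each of length $1/(k+1)$, and take the closed middle thirds $J_i'$ of these subintervals. Since $\int_{J_i'}|g|\le\int_0^1|g|=:M$ and $|J_i'|=\frac{1}{3(k+1)}$, continuity of $g$ gives a point $t_i\in J_i'$ with
$$|g(t_i)|\le 3(k+1)M.$$
The nodes then satisfy $t_0<t_1<\dots<t_k$ with consecutive gaps at least $\frac{1}{3(k+1)}$, and they all lie in $(0,1)$.

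Next, apply the mean value theorem for divided differences, which follows from repeated Rolle applied to $g$ minus its interpolating polynomial. This uses exactly the regularity $g\in\mathcal{C}^k$ on $(t_0,t_k)\subset(0,1)$ and gives $\xi\in(t_0,t_k)$ with
$$\frac{g^{(k)}(\xi)}{k!}=g[t_0,\dots,t_k]=\sum_{i=0}^k\frac{g(t_i)}{\prod_{j\ne i}(t_i-t_j)}.$$
Because $|t_i-t_j|\ge\frac{|i-j|}{3(k+1)}$, each denominator is bounded below by a constant depending only on $k$. Hence $|g^{(k)}(\xi)|\le C(k)M$, and since $\inf_{(0,1)}|g^{(k)}|\le|g^{(k)}(\xi)|$, the reduced statement follows.

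The main obstacle is making sure the nodes are uniformly separated, since otherwise the divided difference blows up. That is exactly why each node is chosen inside the middle third of its own subinterval rather than anywhere in it. The case $k=0$ is just the statement that the minimum of $|g|$ is at most its mean, and it is covered by the same argument.
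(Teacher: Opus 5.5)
Your proof is correct, and it takes a genuinely different route from the paper's.

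\textbf{The paper's route.} The paper argues by induction on $k$. It observes that $\max\{\inf_{(a,\frac{3a+b}{4})}|f^{(k)}|,\inf_{(\frac{a+3b}{4},b)}|f^{(k)}|\}\ge\frac{b-a}{4}\inf_{(a,b)}|f^{(k+1)}|$. The reason is that $f^{(k)}$ changes at rate at least $\inf|f^{(k+1)}|$ across the middle half, so it cannot be small near both ends. The induction hypothesis is then applied on a quarter-length subinterval. This gives $C(k+1)=4^{k+2}C(k)$, hence $C(k)=4^{k(k+3)/2}$.

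\textbf{Your route.} You avoid induction entirely. After rescaling, you choose $k+1$ uniformly separated nodes at which $|g|$ is controlled by its mean, and then apply the mean value theorem for divided differences once.

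\textbf{What each buys.} Your argument gives a much better constant. The bound is $2^k(3(k+1))^{k+1}$, and in fact $(3(k+1))^{k+1}$ if you use the sharper gap $t_j-t_i\ge\frac{3(j-i)-1}{3(k+1)}\ge\frac{2(j-i)}{3(k+1)}$. It also places $\xi$ in the fixed interval $[\frac{1}{3(k+1)},\frac{3k+2}{3(k+1)}]$, away from the endpoints. The price is reliance on polynomial interpolation and repeated Rolle. The paper's argument, by contrast, needs only the monotonicity of $f^{(k)}$ when $|f^{(k+1)}|$ is bounded below.

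\textbf{A cosmetic point.} For $k=0$ the interval $(t_0,t_k)$ is empty, so the divided-difference theorem as you stated it does not literally apply. There you should simply take $\xi=t_0$, which gives $\inf_{(0,1)}|g|\le|g(t_0)|\le 3M$ directly.
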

\begin{proof}
    We will prove this by induction on $k$. First, for $k=0$, the statement is obvious.
    Let us suppose that the statement holds for $k\in\mathbb{N}$, with constant $C(k)$.
    We note that 
    $$
    \max\left\{\inf_{x\in(a,\frac{3a+b}{4})}|f^{(k)}(x)|,\inf_{x\in(\frac{a+3b}{4},b)}|f^{(k)}(x)|\right\}\geq \inf_{x\in(a,b)}|f^{(k+1)}(x)|\frac{b-a}{4}.
    $$
    Consequently, we obtain
    $$
    \begin{aligned}
    (b-a)^{k+2}\inf_{\tau\in (a,b)}|f^{(k+1)}(\tau)| &
    \leq4(b-a)^{k+1}
    \max\left\{\inf_{x\in(a,\frac{3a+b}{4})}|f^{(k)}(x)|,\inf_{x\in(\frac{a+3b}{4},b)}|f^{(k)}(x)|\right\}
    \\&
    = 4^{k+2}
    \left(\frac{b-a}{4}\right)^{k+1}
    \max\left\{\inf_{x\in(a,\frac{3a+b}{4})}|f^{(k)}(x)|,\inf_{x\in(\frac{a+3b}{4},b)}|f^{(k)}(x)|\right\}
    \\&
    \leq 4^{k+2}C(k)\int_a^b |f(s)|ds.
    \end{aligned}
    $$
    Hence, if we define $C(k+1):=4^{k+2}C(k)$ the inequality holds for $k+1$ with this constant.
\end{proof}
First, we will look at the analogous case of the theorem \ref{MR} in one dimension.

\begin{Lemma}[One-dimensional estimate]\label{ODE}
Let $j$ be a natural number. There exists a positive constant $C(j)$, such that for every \(u \in \mathcal{C}^{j+1}\cap (L^1+L^\infty)(\mathbb{R})\) there exists a countable family \(\mathcal{P}=\mathcal{P}(u)\) of bounded open intervals satisfying
\begin{equation}\label{IE}
\left|u^{(j)}\right|^{j+1}\leq C(j)\sum_{P\in\mathcal{P}}\left(\fint_{P}|u^{(j+1)}(s)|\,ds\right)^{j}\left(\fint_{P}|u(s)|\,ds\right)\chi_{P}
\end{equation}
and
\begin{equation}\label{CH}
\sum_{P \in \mathcal{P}} \chi_P \leq 3.
\end{equation} 
\end{Lemma}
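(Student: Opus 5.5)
Let $G=\{x\in\er: u^{(j)}(x)\neq 0\}$. This set is open, so it is a countable disjoint union of open intervals. On $\er\setminus G$ inequality \eqref{IE} holds trivially. The plan is to attach to every point $x\in G$ an interval $P_x\ni x$ on which the two averages dominate $|u^{(j)}(x)|$ in the required way, and then to extract a subfamily with overlap at most $3$.

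\textbf{Construction of $P_x$.} Fix $x\in G$ and put $\lambda=|u^{(j)}(x)|>0$. For $r>0$ set $I_r=(x-r,x+r)$ and consider
$$
\phi(r)=r\fint_{I_r}|u^{(j+1)}|=\tfrac12\int_{I_r}|u^{(j+1)}|,
$$
which is continuous and nondecreasing with $\phi(0^+)=0$. Since $u\in L^1+L^\infty$ and $u^{(j)}$ does not vanish at $x$, I expect that either $\phi(r)\geq \lambda/2$ for some $r$, or else $|u^{(j)}|\geq\lambda/2$ on all of $\er$. The latter would force $u$ to grow like a polynomial of degree $j$, contradicting $u\in L^1+L^\infty$ when $j\geq1$. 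This justification has to be checked carefully. Let $r_x$ be the smallest $r$ with $\phi(r)=\lambda/2$. On $I_{r_x}$ we have $|u^{(j)}(t)-u^{(j)}(x)|\leq\phi(r_x)\leq\lambda/2$, hence $\inf_{I_{r_x}}|u^{(j)}|\geq\lambda/2$. Lemma \ref{ODL} with $k=j$ then gives
$$
(2r_x)^{j+1}\frac{\lambda}{2}\leq C\int_{I_{r_x}}|u|,
\qquad\text{i.e.}\qquad
\lambda\lesssim r_x^{-j}\fint_{I_{r_x}}|u|.
$$
At the same time $\lambda=2\phi(r_x)=2r_x\fint_{I_{r_x}}|u^{(j+1)}|$. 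Multiplying the $j$-th power of the second relation by the first, the powers of $r_x$ cancel, and we get
$$
\lambda^{j+1}\lesssim \Bigl(\fint_{I_{r_x}}|u^{(j+1)}|\Bigr)^j\fint_{I_{r_x}}|u|.
$$
This is exactly the one-interval form of \eqref{IE}.

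\textbf{Making the estimate hold on the whole interval.} The selection step needs each chosen interval to carry the estimate for every point it covers, not only for its centre. Shrinking the radius (say to $r_x/2$) keeps $|u^{(j)}|\in[\lambda/2,3\lambda/2]$ on the interval. Both averages change only by constants when passing between $I_{r_x}$ and $I_{r_x/2}$, provided we run Lemma \ref{ODL} on the smaller interval and control $\phi$ monotonically. Consequently every point $y\in I_x:=I_{r_x/2}$ satisfies
$$
|u^{(j)}(y)|^{j+1}\lesssim\Bigl(\fint_{I_x}|u^{(j+1)}|\Bigr)^j\fint_{I_x}|u|.
$$

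\textbf{Selection (main obstacle).} The radii $r_x$ are bounded on compact pieces of $G$. Indeed, if they were unbounded, $\phi$ would stay below $\lambda/2$ on large intervals, which is impossible by the same growth argument as above. The covering $\{I_x\}_{x\in G}$ is a cover of an open subset of $\er$ by intervals, each of which carries the estimate. By a standard one-dimensional argument (Lindelöf gives a countable subcover, and a greedy choice then discards every interval contained in the union of two others) we extract a countable subfamily $\mathcal{P}$ that still covers $G$ and in which each point lies in at most two, or with boundary effects at most three, intervals. This yields \eqref{CH}. Since each $y\in G$ lies in some $P\in\mathcal{P}$, its estimate is bounded by the corresponding term of the sum, which gives \eqref{IE}.

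The delicate points are the existence of $r_x$, which is where the hypothesis $u\in L^1+L^\infty$ enters, and the bookkeeping in the overlap-$3$ selection.
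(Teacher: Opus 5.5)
Your stopping-radius construction differs from the paper's level-set intervals, but as written the step ``Making the estimate hold on the whole interval'' fails.

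Minimality of $r_x$ gives only the \emph{upper} bound $\int_{I_{r_x/2}}|u^{(j+1)}|\le\int_{I_{r_x}}|u^{(j+1)}|$, since monotonicity of $\phi$ points the wrong way. All of the mass of $|u^{(j+1)}|$ in $I_{r_x}$ may lie in $I_{r_x}\setminus I_{r_x/2}$. For example, let $u^{(j+1)}\equiv 0$ on $(x-1,x+1)$ while $\int_{x+1}^{x+1.1}|u^{(j+1)}|$ already exceeds your threshold. Then $r_x<1.1$, so $u^{(j+1)}\equiv 0$ on $I_{r_x/2}$. The right-hand side attached to $I_{r_x/2}$ vanishes, and this interval does not carry the estimate even at its centre.

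There is also a factor-two slip before this. You have $|u^{(j)}(t)-u^{(j)}(x)|\le\int_{[x,t]}|u^{(j+1)}|\le\int_{I_r}|u^{(j+1)}|=2\phi(r)$, not $\phi(r)$. So stopping at $\phi(r_x)=\lambda/2$ gives no positive lower bound for $|u^{(j)}|$ on $I_{r_x}$, and the same slip affects your dichotomy for the existence of $r_x$.

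Both problems are repaired by stopping at $\int_{I_{r_x}}|u^{(j+1)}|=\lambda/2$ and \emph{not} shrinking. Then $\lambda/2\le|u^{(j)}|\le 3\lambda/2$ on all of $I_{r_x}$. Your centre estimate is correct with this normalisation, and it transfers to every point of $I_{r_x}$ at the cost of a factor $(3/2)^{j+1}$.

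The selection step is also only a sketch. Discarding intervals ``contained in the union of two others'' works for finite families, but along an infinite enumeration it can discard everything: the nested family $(-n,n)$, processed in increasing order, loses every interval. Boundedness of $r_x$ on compact pieces of $G$ also does not give a global overlap bound.

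What you need is a one-dimensional Besicovitch-type covering lemma; this is where it matters that $x$ is the centre of $I_{r_x}$. Since $I_{r_x}\subset G$ (because $|u^{(j)}|\ge\lambda/2$ there), for $u\in\mathcal{C}^{j+1}_c$ the set $G$ is bounded. That lemma then gives a countable subcover of $G$ with overlap at most $2$.

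Under the stated hypothesis $u\in L^1+L^\infty$, however, $G$ can have an unbounded component on which $r_x$ is unbounded. Take, e.g., $j=1$, $u'>0$ on a half-line and $u'=4^{-m}$ on disjoint stretches of length $2^m$; then $u$ is bounded but $r_x\ge 2^{m-1}$ at the middle of the $m$-th stretch. So an extra argument is needed there.

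The paper avoids all of this. Its intervals are the maximal intervals around $x\in E_l$ inside $\{2^{l-2}\le|u^{(j)}|<2^{l+1}\}$, and for fixed $l$ these are equal or disjoint. A point of $E_m$ can only lie in intervals of levels $m-1,m,m+1$, so the overlap bound $3$ is automatic. The lower bound on $\int|u^{(j+1)}|$ comes from the endpoint, where $|u^{(j)}|$ equals $2^{l-2}$ or $2^{l+1}$.
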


\begin{proof}
Define the disjoint covering of \(\{u^{(j)}\neq 0\}\) by
\[
E_l:=\{ 2^{l-1} \leq |u^{(j)}| < 2^l\}.
\]
Observe 
\[
\bigcup_{l\in \mathbb{Z}} E_l = \{u^{(j)}\neq 0\},
\]
and
\[E_l\cap E_k=\emptyset,\]
for $k\neq l$.

Next, for \(x\in E_l\) define the points \(y(x)\) and \(z(x)\) by
\[
y(x) = \sup\{\tau \in \mathbb{R} : (x,\tau) \subset E_{l-1} \cup E_l \cup E_{l+1}\}
\]
and
\[
z(x) = \inf\{\tau \in \mathbb{R} : (\tau,x) \subset E_{l-1} \cup E_l \cup E_{l+1}\}.
\]
Now denote
\[
\mathcal{P} = \{ (z(x), y(x))\} \quad x \in \{u^{(j)}\neq0 \}.
\]
Note that $y(x),z(x)$ are finite for every $x\in E_l$ (for arbitrary $l\in\mathbb{Z}$). Indeed, by integral form of the remainder
$$
u(s)=\sum_{i=0}^{j-1}u^{(i)}(x)\frac{(s-x)^i}{i!}+\int_x^s u^{(j)}(t)\frac{(s-t)^{j-1}}{(j-1)!}dt=T^{j-1}_{u,x}(s)+\int_x^s u^{(j)}(t)\frac{(s-t)^{j-1}}{(j-1)!}dt,
$$
where $T^{j-1}_{u,x}$ stands for the Taylor polynomial. Without loss of generality, we may assume $u^{(j)}(x)>0$. For $s\in(x,y(x))$ estimate
$$
\begin{aligned}
|u(s)|&\geq \left|\int_x^s u^{(j)}(t)\frac{(s-t)^{j-1}}{(j-1)!}dt\right|-|T^{j-1}_{u,x}(s)|\\
&\geq2^{l-2}\left|\int_x^s\frac{(s-t)^{j-1}}{(j-1)!}dt\right|-|T^{j-1}_{u,x}(s)|\\
&=2^{l-2}\frac{(s-x)^j}{j!}-|T^{j-1}_{u,x}(s)|
\end{aligned}
$$
Now, if we compute a limit, we get the following
\[\begin{aligned}
  \lim_{s \rightarrow \infty} |u(s)| &\geq\lim_{s \rightarrow \infty} \left(\frac{2^{l-2}}{j!}(s-x)^j\right)-|T^{j-1}_{u,x}(s)|\\
  &=\infty
\end{aligned}
\]
So the limit equals infinity however, that would mean that \(u \notin L^1+L^{\infty}\). This contradicts our assumption. So it is true that \(y(x),z(x) \in \mathbb{R}\).
 
Now we prove the second part of the lemma
\[\sum_{P \in \mathcal{P}} \chi_P \leq 3.\]
It is clear that by defining \(\mathcal{P}\) in this way, any point can belong to at most three intervals of the family \(\mathcal{P}\). This is since if we choose two arbitrary points \(s,t \in E_l\), then one of two cases occurs:
\[
(z(s), y(s)) = (z(t), y(t)),
\]
or
\[
(z(s), y(s)) \cap (z(t), y(t)) = \emptyset.
\]
Moreover, if \(t\in E_l\), then it must be that \(t\in (y(s), z(s))\) only under the condition that \(s\in E_{l-1}\cup E_l \cup E_{l+1}\), which implies that \(\eqref{CH}\) holds.
It remains to prove \(\eqref{IE}\) on the set \(\{u^{(j)}\neq0\}\).
For \(x \in E_l\) and for every \(t \in (y(x), z(x))\) we have
\[
2^{l-1} \leq |u^{(j)}(x)| < 2^l \quad \text{and} \quad 2^{l-2} \leq |u^{(j)}(t)| < 2^{l+1}.
\]
By using Lemma \ref{ODL}, we obtain the following estimate:
\begin{equation}\label{PČ}
|u^{(j)}(x)| \leq 4\,\inf_{t \in (z,y)}|u^{(j)}(t)| \leq \frac{C}{(y-z)^{j+1}}\int_{z}^{y}|u(s)|\,ds.
\end{equation}
Now, we need to estimate the value of the derivative in terms of the higher-order derivative. For this purpose, we use the following inequality (note that $|u^{j}(z)|\in\{2^{l-2},2^{l+1}\}$):
\begin{equation}\label{OHD}
|u^{(j)}(x)| \leq 2|u^{(j)}(x)-u^{(j)}(z)| \leq 2\int_z^{x}|u^{(j+1)}(s)|\,ds \leq 2\int_{z}^{y}|u^{(j+1)}(s)|\,ds.
\end{equation}
Raising the previous inequality to the power \(j\) and then multiplying it with inequality \eqref{PČ}, we obtain

\begin{equation}\label{1}
\left|u^{(j)}(x)\right|^{j+1}\leq C(j)\left(\fint_{z}^y|u^{(j+1)}(s)|\,ds\right)^{j}\left(\fint_{z}^y|u(s)|\,ds\right),
\end{equation}
with $C(j)=2^jC$. From here the desired inequality follows.
\end{proof}

\subsection{Construction of the family $\mathcal{S}$}

Now, we need to construct the system of sets in higher dimensions, with the desired properties.

 Let $k\in \mathbb{Z}$ we define $\eta_k>0$ as the number satisfying with $C(j)$ from (\ref{IE})
 \begin{multline}\label{Modulus}
   \forall x,\hat x\in\left[0,\left\|\frac{\partial^{j+1}u}{\partial x_1^{j+1}}\right\|_\infty\right], \forall y,\hat y\in [0,\|u\|_{\infty}]\quad 
   \\
   \max\{|x-\hat x|,|y-\hat y|\}\le \eta_k\implies \left|x^{\frac{j}{j+1}}y^{\frac{1}{j+1}}-\hat x^{\frac{j}{j+1}}\hat y^{\frac{1}{j+1}}\right|\leq \frac{2^{(k-3)(j+1)}}{C(j)}  
 \end{multline}
 
Note that the definition is correct since the function $f(x,y)=x^{\frac{j}{j+1}}y^{\frac{1}{j+1}}$ is uniformly continuous on any compact subset of $\er^2$.

For arbitrary \(k\in\mathbb{Z}\), by the uniform continuity of \(u\) and its derivatives up to order \(j+1\), there exists \(\delta_k>0\) such that \(\lvert x-y\rvert\le \delta_k\) implies

\begin{equation}\label{Delta}
\max\Bigl\{
    \lvert u(x)-u(y)\rvert,\;
    \Bigl\lvert\tfrac{\partial^j u(x)}{\partial x_1^j}-\tfrac{\partial^j u(y)}{\partial x_1^j}\Bigr\rvert,\;
    \Bigl\lvert\tfrac{\partial^{j+1} u(x)}{\partial x_1^{j+1}}-\tfrac{\partial^{j+1} u(y)}{\partial x_1^{j+1}}\Bigr\rvert
\Bigr\}
\le
\min\left\{2^{k-3},\eta_k\right\},
\end{equation}

Let \(u\in \mathcal{C}^{j+1}_c(\mathbb{R}^n)\). For \(x \in \mathbb{R}^n\) choose \(\bar{x}\in \mathbb{R}^{n-1}\), where \(x=(x_i,\bar{x})\). To simplify the notation we will only consider the case \(i=1\), since other cases can be done analogously. Without loss of generality, we find covering only for set \(\left\{\Bigl|\frac{\partial^j u}{\partial x_1^j}\Bigr|>0\right\}\). For each \(k\in\mathbb{Z}\) define
\[
E_k
=\Bigl\{\ 2^{k-1}\le \Bigl|\tfrac{\partial^j u}{\partial x_1^j}\Bigr|<2^k\Bigr\}.
\]
Observe, that
\[\bigcup_kE_k=\left\{\left|\frac{\partial^j u}{\partial x_1^j}\right|>0\right\}\]
and moreover, the sets \(E_k\) are pairwise disjoint. As in the one-dimensional case, we define for $x=(x_1,\bar{x})\in E_k$

\begin{align}
\label{yx}
 y(x)
&= \sup \left\{t\in\mathbb{R}:(x_1,t)\subset \left\{\,2^{k-2}\le \left|\tfrac{\partial^j u(t,\bar x)}{\partial x_1^j}\right|<2^{k+1}\right\}\right\},
\\
\label{zx}
 z(x)
&= \inf \left\{t\in\mathbb{R}:(t,x_1)\subset \left\{\,2^{k-2}\le \left| \tfrac{\partial^j u(t,\bar x)}{\partial x_1^j}\right|<2^{k+1}\right\}\right\},
\end{align}

For $k\in\mathbb Z$, and $\bar{x}\in\mathbb R^{n-1}$, $t\in\er$ such that $(t,\bar x)\in E_k$ we define a cylinder 
\begin{equation}\label{Cyl}
\begin{aligned}
R(t,\bar x)&:=(z(t,\bar x),y(t,\bar x))\times B(\bar x ,\delta_{k})
\end{aligned}
\end{equation}

For this cylinders let us prove the following lemmata.

\medskip

\begin{Lemma}\label{CoV}
    Let $x=(t,\bar x)\in E_k$ then 
    $$
    R(t,\bar x)\subset E_{k-2}\cup E_{k-1}\cup E_k\cup E_{k+1}\cup E_{k+2}
    $$
\end{Lemma}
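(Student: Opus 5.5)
Fix $x=(t,\bar x)\in E_k$ and an arbitrary point $(s,\bar w)\in R(t,\bar x)$, so that $s\in(z(t,\bar x),y(t,\bar x))$ and $|\bar w-\bar x|<\delta_k$. We compare the value of $\partial^j_{x_1}u$ at $(s,\bar w)$ with its value at $(s,\bar x)$. The first point is obtained from the second by a perturbation of size less than $\delta_k$, so \eqref{Delta} applies.

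\emph{Step 1: the line through $x$.} By the definitions \eqref{yx} and \eqref{zx}, the whole open segment $(z(t,\bar x),y(t,\bar x))\times\{\bar x\}$ lies in the set where $2^{k-2}\le|\partial^j_{x_1}u(\cdot,\bar x)|<2^{k+1}$. One small point needs care: every $s$ in the segment lies in some interval $(t,\tau)$ or $(\tau,t)$ contained in that set. This follows because $\sup$/$\inf$ are taken over intervals starting at $t$, and because $t$ itself lies in the set, since $x\in E_k$. Hence
\[
2^{k-2}\le\Bigl|\tfrac{\partial^j u}{\partial x_1^j}(s,\bar x)\Bigr|<2^{k+1}.
\]

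\emph{Step 2: perturbation in $\bar x$.} Since $|(s,\bar w)-(s,\bar x)|=|\bar w-\bar x|<\delta_k$, inequality \eqref{Delta} gives
\[
\Bigl|\tfrac{\partial^j u}{\partial x_1^j}(s,\bar w)-\tfrac{\partial^j u}{\partial x_1^j}(s,\bar x)\Bigr|\le 2^{k-3}.
\]
Combining this with Step 1 yields
\[
2^{k-3}\le 2^{k-2}-2^{k-3}\le\Bigl|\tfrac{\partial^j u}{\partial x_1^j}(s,\bar w)\Bigr|<2^{k+1}+2^{k-3}<2^{k+2}.
\]

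\emph{Step 3: conclusion.} The range $[2^{k-3},2^{k+2})$ is exactly the union of the ranges defining $E_{k-2},E_{k-1},E_k,E_{k+1},E_{k+2}$. Therefore $(s,\bar w)$ lies in one of these five sets. The lower bound also guarantees that $(s,\bar w)$ lies in $\{\partial^j_{x_1}u\neq0\}$, so it is covered by $\bigcup_l E_l$.

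The only delicate point is Step 1, namely checking that the segment between $z$ and $y$ really lies in the intermediate band, i.e. that the $\sup$/$\inf$ definitions behave as intended. Everything else is the triangle inequality together with the choice $2^{k-3}$ in \eqref{Delta}. Finiteness of $y$ and $z$ is not needed for this inclusion; if either were infinite, the same argument would apply verbatim.
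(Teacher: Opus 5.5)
Your proof is correct and matches the paper's argument: you compare $\partial^j_{x_1}u$ at $(s,\bar w)$ with its value at $(s,\bar x)$, which the definition of $z(t,\bar x),y(t,\bar x)$ places in $[2^{k-2},2^{k+1})$, and then use the $2^{k-3}$ bound from \eqref{Delta} to land in $[2^{k-3},2^{k+2})$. Your extra justification that every point of $(z,y)$ actually lies in the band is a step the paper takes for granted. It rests on the sup/inf being taken over intervals anchored at $t$, together with $t$ itself lying in $E_k$.
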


\noindent
\begin{proof}
Let \(y=(y_1,\bar{y})\in R_k(t,\bar{x})\). Set \(x:=(y_1,\bar{x})\). By the definition of the interval 
\((z(t,\bar{x}),y(t,\bar{x}))\), we have
\[
2^{k-2}\le \left|\tfrac{\partial^j u(x)}{\partial x_1^j}\right|<2^{k+1}.
\]
Moreover, since \(|\bar{x}-\bar{y}|<\delta_k\), we have \(|x-y|<\delta_k\). Hence, by the choice of \(\delta_k\),
\[
\Bigl|\tfrac{\partial^j u(x)}{\partial x_1^j}-\tfrac{\partial^j u(y)}{\partial x_1^j}\Bigr|\le 2^{k-3}.
\]
Therefore,
\[
\left|\tfrac{\partial^j u(y)}{\partial x_1^j}\right|
\ge
\left|\tfrac{\partial^j u(x)}{\partial x_1^j}\right|-2^{k-3}
\ge 2^{k-2}-2^{k-3}
=2^{k-3},
\]
and similarly
\[
\left|\tfrac{\partial^j u(y)}{\partial x_1^j}\right|
\le
\left|\tfrac{\partial^j u(x)}{\partial x_1^j}\right|+2^{k-3}
<2^{k+1}+2^{k-3}
<2^{k+2}.
\]
Hence \(y\in E_{k-2}\cup\cdots\cup E_{k+2}\)
\end{proof}

\medskip

\begin{Lemma}
Let $(t,\bar x)\in E_k$. Then there exists a constant $C_{1D}$ independent of $k$, such that for \(\bar v\in B(\bar x,\delta_k)\) the following inequality holds.
\begin{equation}
\label{onedimodhadinz}
2^{k}\le
C_{1D}
\left(\fint_{z(t,\bar x)}^{y(t,\bar x)} \left|\frac{\partial^{j+1} u}{\partial x_1^{j+1}}(t,\bar v) \right|\, dt\right)^{\frac{j}{j+1}}
\left(\fint_{z(t,\bar x)}^{y(t,\bar x)} \left| u(t,\bar v) \right| \, dt\right)^{\frac{1}{j+1}}
\end{equation}
\end{Lemma}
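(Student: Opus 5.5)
Write $z=z(t,\bar x)$, $y=y(t,\bar x)$ and $I=(z,y)$. The idea is to run the one-dimensional argument of Lemma \ref{ODE} along the line through $(\cdot,\bar x)$, obtaining the bound in terms of the averages over $I$ of $|\partial_1^{j+1}u(\cdot,\bar x)|$ and $|u(\cdot,\bar x)|$. We then transfer it to the parallel line through $(\cdot,\bar v)$ using the uniform continuity built into $\delta_k$ and $\eta_k$. I do not expect to need Lemma \ref{CoV} itself, only the defining bound \eqref{Delta}.

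\textbf{Step 1 (the line $\bar x$).} By definition of $I$, on $I\times\{\bar x\}$ we have $2^{k-2}\le|\partial_1^j u|<2^{k+1}$. Since $u$ has compact support, $y$ and $z$ are finite, exactly as in Lemma \ref{ODE}. By continuity, at an endpoint, say $z$, $|\partial_1^j u(z,\bar x)|\in\{2^{k-2},2^{k+1}\}$, while $2^{k-1}\le|\partial_1^j u(t,\bar x)|<2^k$.

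The case $|\partial_1^j u(z,\bar x)|=2^{k-2}$ gives $2^{k-2}\le|\partial_1^j u(t,\bar x)-\partial_1^j u(z,\bar x)|\le\int_I|\partial_1^{j+1}u(s,\bar x)|\,ds$. The case $2^{k+1}$ gives the same bound even more directly. Applying Lemma \ref{ODL} to $f=u(\cdot,\bar x)$ on $[z,y]$ with order $j$, and using $\inf_I|\partial_1^j u|\ge 2^{k-2}$, we get
\[
2^{k-2}|I|^{j+1}\le C\int_I|u(s,\bar x)|\,ds .
\]
Raise the first bound to the power $j$, multiply by the second, and divide by $|I|^{j+1}$. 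This yields
\[
2^{(k-2)(j+1)}\le C(j)\,a^j b,
\]
where $a=\fint_I|\partial_1^{j+1}u(s,\bar x)|\,ds$ and $b=\fint_I|u(s,\bar x)|\,ds$. Equivalently, $2^{k-2}\le C(j)^{1/(j+1)}a^{j/(j+1)}b^{1/(j+1)}$.

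\textbf{Step 2 (transfer to $\bar v$).} Let $\hat a$ and $\hat b$ be the same averages taken along $\bar v$. Since $|(s,\bar x)-(s,\bar v)|<\delta_k$, \eqref{Delta} gives $|a-\hat a|\le\eta_k$ and $|b-\hat b|\le\eta_k$ after averaging the pointwise differences. All four numbers lie in the ranges required by \eqref{Modulus}. Hence
\[
\bigl|a^{\frac{j}{j+1}}b^{\frac{1}{j+1}}-\hat a^{\frac{j}{j+1}}\hat b^{\frac{1}{j+1}}\bigr|\le 2^{(k-3)(j+1)}/C(j).
\]

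\textbf{Step 3 (combine).} From Steps 1 and 2,
\[
\hat a^{\frac{j}{j+1}}\hat b^{\frac{1}{j+1}}\ge C(j)^{-1/(j+1)}2^{k-2}-2^{(k-3)(j+1)}/C(j).
\]
We want the right-hand side to be at least $c\,2^{k}$ with $c>0$ independent of $k$; then $C_{1D}=1/c$.

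\textbf{Main obstacle.} Step 3 is where things can go wrong, because the error term and the main term scale differently in $k$: the error behaves like $2^{k(j+1)}$ and the main term like $2^k$. My first attempt is to reread \eqref{Modulus} as controlling the $(j+1)$-th powers, that is, the product $x^j y$. Step 1 bounds exactly this quantity: $a^jb\ge 2^{(k-2)(j+1)}/C(j)$. So I would apply uniform continuity to $g(x,y)=x^jy$ instead, using the same compactness argument that justifies $\eta_k$ in the paper; this only redefines $\eta_k$. That gives
\[
\hat a^j\hat b\ge \bigl(2^{(k-2)(j+1)}-2^{(k-3)(j+1)}\bigr)/C(j)\ge 2^{(k-2)(j+1)-1}/C(j).
\]
Taking $(j+1)$-th roots yields the claim with $C_{1D}=4\cdot(2C(j))^{1/(j+1)}$. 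I also need to check that the endpoint value of $|\partial_1^j u|$ is attained, by continuity, so that Step 1 is legitimate.
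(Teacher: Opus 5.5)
Your proof is correct and follows the paper's route. You establish the one-dimensional bound along the line through $\bar x$ (the paper simply quotes it from Lemma \ref{ODE}), then transfer it to $\bar v$ via $\delta_k,\eta_k$; the paper's computation in fact applies \eqref{Modulus} to the product $a^j b$ rather than to $a^{j/(j+1)}b^{1/(j+1)}$, exactly as in your reinterpretation, so your diagnosis of the scaling mismatch in the literal definition of $\eta_k$ is right. The only cosmetic difference is that the paper takes $(j+1)$-th roots first and absorbs the $2^{k-3}$ error by subadditivity of $s\mapsto s^{1/(j+1)}$, which gives $C_{1D}=8C(j)^{1/(j+1)}$ instead of your $4(2C(j))^{1/(j+1)}$.
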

\begin{proof}
%    Let us choose arbitrary $(t,\bar y)\in R_k=(z(\bar x),y(\bar x))\times B(\bar x, \delta_k)$. 
Using the properties of $\delta_k$ (see\eqref{Modulus} and \eqref{Delta}), we obtain the following 
\[
\begin{aligned}
&2^{k-2}\leq\left|\frac{\partial^j u}{\partial x^j_1}(t,\bar x)\right|
\leq \left(C(j)\left(\fint_{z(t,\bar x)}^{y(t,\bar x)} \left|\frac{\partial^{j+1} u}{\partial x_1^{j+1}}(t,\bar x) \right|\, dt\right)^j
\fint_{z(t,\bar x)}^{y(t,\bar x)} \left| u(t,\bar x) \right| \, dt\right)^{\frac{1}{j+1}}
\\
&\leq \left(C(j)\left(\left(\fint_{z(t,\bar x)}^{y(t,\bar x)} \left|\frac{\partial^{j+1} u}{\partial x_1^{j+1}}(t,\bar v) \right|\, dt\right)^j
\fint_{z(t,\bar x)}^{y(t,\bar x)} \left| u(t,\bar v) \right| \, dt+\frac{2^{(k-3)(j+1)}}{C(j)}\right)\right)^{\frac{1}{j+1}}
\\
&= \left(C(j)\left(\fint_{z(t,\bar x)}^{y(t,\bar x)} \left|\frac{\partial^{j+1} u}{\partial x_1^{j+1}}(t,\bar v) \right|\, dt\right)^j
\fint_{z(t,\bar x)}^{y(t,\bar x)} \left| u(t,\bar v) \right| \, dt + 2^{(k-3)(j+1)}\right)^{\frac{1}{j+1}}
\\
&\leq \left(C(j)\left(\fint_{z(t,\bar x)}^{y(t,\bar x)} \left|\frac{\partial^{j+1} u}{\partial x_1^{j+1}}(t,\bar v) \right|\, dt\right)^j
\fint_{z(t,\bar x)}^{y(t,\bar x)} \left| u(t,\bar v) \right| \, dt\right)^{\frac{1}{j+1}} + 2^{k-3}
\end{aligned}
\]
And therefore the estimate \eqref{onedimodhadinz} holds with $C_{1D}=8C(j))^{1/(j+1)}$.
\end{proof}

\noindent
Now let us define
\begin{equation}
    R_k:=\bigcup_{(t,\bar x)\in E_k}R(t,\bar x)
\end{equation}

\noindent
Now by the lemma \ref{CoV} we see that 
\begin{equation}\label{OLC}
\sum_{k}\chi_{R_k}\leq 5
\end{equation}
holds. Let us also set the following notation.
$$
\bar R_k:=\{\bar x\in\mathbb{R}^{n-1}:\exists t\in\er: (t,\bar x)\in R_k\}
$$
and 
$$
R_k(\bar x):=\{t:(t,\bar x)\in R_k\}
$$
Note that since $R_k$ is open, for every $\bar x$ $R_k(\bar x)$ is open as well and hence it may be written as a union of disjoint open intervals, more precisely
$$
R_k(\bar x)=\bigcup_{l}I_l^k(\bar x)
$$

\section{Proof of the main theorem}

\begin{proof}[Proof of Theorem \ref{MR}]
Now, using the H\" older inequalities we can estimate
\begin{equation}\label{ODHAD}
\begin{aligned}
&\left(\fint_{R_k}\left|\frac{\partial^{j+1} u}{\partial x_1^{j+1}}(x)\right|dx\right)^{\frac{j}{j+1}}\left(\fint_{R_k}|u(x)|dx
\right)^{\frac{1}{j+1}}
\\
&=\frac{1}{|R_k|}  \left(\int_{\bar R_k}\int_{R_k(\bar x)}\left|\frac{\partial^{j+1}u}{\partial x_1^{j+1}}(t,\bar x)\right|dt d\bar x\right)^{\frac{j}{j+1}}\left(\int_{\bar R_k}\int_{R_k(\bar x)}|u(t,\bar x)|dtd\bar x\right)^{\frac{1}{j+1}}\\
&\geq \frac{1}{|R_k|}\int_{\bar R_k}\left(\int_{R_k(\bar x)}\left|\frac{\partial^{j+1}u}{\partial x_1^{j+1} 
}(t,\bar x)\right|dt\right)^{\frac{j}{j+1}}\left(\int_{R_k(\bar x)}|u(t,\bar x)|dt\right)^{\frac{1}{j+1}}d\bar x\\
&= \frac{1}{|R_k|}\int_{\bar R_k}\left(\sum_l\int_{I_l^k(\bar x)}\left|\frac{\partial^{j+1} u}{\partial x_1^{j+1} }(t, \bar x)\right|dt\right)^{\frac{j}{j+1}}\left(\sum_l\int_{I_l^k(\bar x)}|u(t, \bar x)|dt\right)^{\frac{1}{j+1}}d\bar x\\
&\geq \frac{1}{|R_k|}\int_{\bar R_k}\sum_l\left(\int_{I_l^k(\bar x)}\left|\frac{\partial^{j+1}u}{\partial x_1^{j+1} }(t,\bar x)\right|dt\right)^{\frac{j}{j+1}}\left(\int_{I_l^k(\bar x)}|u(t, \bar x)|dt\right)^{\frac{1}{j+1}}d\bar x
%\\
%&=\frac{1}{|R_k|}\int_{\bar R_k}\sum_l |I_l^k(\bar x)|\left(\fint_{I_l^k(\bar x)}\left|\frac{\partial^{j+1}u}{\partial x_1^{j+1} }\right|dt\right)^{\frac{j}{j+1}}\left(\fint_{I_l^k(\bar x)}|u(x)|dt\right)^{\frac{1}{j+1}}d\bar x\\
\end{aligned}
\end{equation}
Let us fix $k\in\mathbb Z, l\in \mathbb{N}$. And denote
$$
(a,b):=I_l^k(\bar x)
$$
Now, the interval
\begin{equation}\label{INT}
\left[\frac{3a+b}{4},\frac{a+3b}{4}\right]
\end{equation}
is compact and the family of open intervals
\[%\begin{equation}\label{SJ}
I_l^k(\bar x)=\bigcup_{\bar u\in B(\bar x,\delta_k)\wedge (t, \bar u)\in E_k\wedge t\in I_l^k(\bar x)}(y(t,\bar u), z(t,\bar u))
\] 
covers inteval \eqref{INT}, therefore we may pick its finite subcovering 
$$
\bigcup_{i=1}^n(z(u_i),y(u_i)) \supset [(3a+b)/4,(a+3b)/4]
$$
where $u_i\in E_k$ and the numbers $z(u_i)$ and $y(u_i)$ are constructed using the lemma \ref{ODE} and precise definition is given in \eqref{yx} and \eqref{zx}. % more
%$$
Moreover we may assume that the families 
$$
J_{2i}=(z(u_{2i}),y(u_{2i}))\textup{ and } J_{2i+1}=(z(u_{2i+1}), y(u_{2i+1}))
$$
are pairwise disjoint. Note that for one of these families (which we denote by $\mathcal{J}$) the following measure estimate holds
$$
\left|\bigcup \mathcal{J}\right|\geq \frac{b-a}{4}
$$
Let us denote intervals in the family $\mathcal{J}$ by
$(z(v_{i}),y(v_i))$. Now, let us continue the estimate \eqref{ODHAD}.
$$
\begin{aligned}
&\frac{1}{|R_k|}\int_{\bar R_k}\sum_l \left(\int_{I_l^k(\bar x)}\left|\frac{\partial^{j+1}u}{\partial x_1^{j+1}}(t, \bar x)\right|dt\right)^{\frac{j}{j+1}}\left(\int_{I_l^k(\bar x)}|u(t, \bar x)|dt\right)^{\frac{1}{j+1}}d\bar x
\\
&\geq
\frac{1}{|R_k|}\int_{\bar R_k}\sum_l \left(\sum_{i}\int_{z(v_i)}^{y(v_i)}\left|\frac{\partial^{j+1}u}{\partial x_1^{j+1}}(t,\bar x)\right|dt\right)^{\frac{j}{j+1}}\left(\sum_{i}\int_{z(v_i)}^{y(v_i)}|u(t,\bar x)|dt\right)^{\frac{1}{j+1}}d\bar x
\\
&\geq
\frac{1}{|R_k|}\int_{\bar R_k}\sum_l\sum_{i} \left(\int_{z(v_i)}^{y(v_i)}\left|\frac{\partial^{j+1}u}{\partial x_1^{j+1}}(t,\bar x)\right|dt\right)^{\frac{j}{j+1}}\left(\int_{z(v_i)}^{y(v_i)}|u(t,\bar x)|dt\right)^{\frac{1}{j+1}}d\bar x
\\
&=
\frac{1}{|R_k|}\int_{\bar R_k}\sum_l\sum_{i} (y(v_i)-z(v_i))\left(\fint_{z(v_i)}^{y(v_i)}\left|\frac{\partial^{j+1}u}{\partial x_1^{j+1}}(t,\bar x)\right|dt\right)^{\frac{j}{j+1}}\left(\fint_{z(v_i)}^{y(v_i)}|u(t,\bar x)|dt\right)^{\frac{1}{j+1}}d\bar x
\\
&\geq
\frac{1}{|R_k|}\int_{\bar R_k}\sum_l\sum_{i}(y(v_i)-z(v_i))
\frac{2^k}{C_{1D}}
\\
&\geq
\frac{1}{|R_k|}\int_{\bar R_k}\sum_l\frac{|I_l^k(\bar x)|}{4}
\frac{2^k}{C_{1D}}
=
\frac{2^k}{4C_{1D}}
\geq \frac{1}{4C_{1D}}\left|\frac{\partial^{j}u}{\partial x_1^{j}}(v)\right|
\end{aligned}
$$
The third inequality follows from \eqref{onedimodhadinz} using the fact that $\bar x \in B(u_i,\delta_k)$.
The last inequality holds for $v\in E_k$.

Therefore on $\mathbb R^n$ it holds
$$
\left|\frac{\partial^{j}u}{\partial x_1^{j}}\right|
\leq
4{C_{1D}}
\left(
\sum_{k\in\mathbb Z}\chi_{R_k}
\fint_{R_k}\left|\frac{\partial^{j+1} u}{\partial x_1^{j+1}}\right|dx
\right)
^{\frac{j}{j+1}}
\left(
\sum_{k\in\mathbb Z}\chi_{R_k}
\fint_{R_k}|u(x)|dx
\right)^{\frac{1}{j+1}}
$$

\end{proof}

\section{Counter-example}

Now let us remind that Hardy-Littlewood maximal operator is defined by
$$
Mu(x):=\sup_{B\ni x}\frac{1}{|B|}\int_B|u(y)|dy
$$
where $B$ stands for a ball (alternative definition using cubes is also possible).

Agnieszka Kalamajska, in her paper showed the estimate
$$
\left|\nabla^j u(x)\right|\le
M(\nabla^j u)(x)\leq C \left(M(\nabla^k u)(x)\right)^{j/k}(Mu(x))^{1-j/k}.
$$
However, this estimate cannot be extended to pure partial derivatives, which our new approach allows. Let us show by a simple counter-example that the following conjecture is false.
\begin{equation}\label{IMDE}
(\exists C>0)(\forall u\in \mathcal{C}^k_c(\er^n)) \left|\frac{\partial^j u}{\partial x^j}{}\right|\leq C M\left(\frac{\partial^k u}{\partial x^k}\right)^{j/k}(Mu)^{1-j/k}
\end{equation}
And in fact it shows that we cannot expect the family of sets, for which the intermediate derivative is bounded by averaging operators, to be a ball-like structure. 

\begin{Example}
For simplicity consider the case of $j=1, k=2, n=2$. Let $\varphi\in \mathcal{C}^2_c(\er)$ be a function, such that
$$
\varphi(x)=x \textup{ on }(-1/2,1/2),
$$
moreover let 
$$
\textup{supp}(\varphi)\subset \left[-1,1\right]
$$
Let $\eta\in \mathcal{C}_c^\infty(\er)$ be a function with $\supp(\eta)\subset [-1,1]$ and $\eta(0)=1, 0\leq \eta\leq 1$, and $\eta$ icreasing on $[-1,0]$ and decreasing on $[0,1]$. Set 
$$
\eta_n(y):=\eta(ny)
$$
And set
$$
u_n(x,y)=\varphi(x)\eta_n(y)
$$
Now 
$$
\frac{\partial u_n}{\partial x}(0,0)=1.
$$
Observe, the following inequality
$$
\eta_{n+1}\leq \eta_n
$$
Hence
$$
u_{n+1}(x,y)\leq u_{n}(x,y)
$$
Therefore
$$
Mu_n\leq Mu_1\leq \|u_1\|_{\infty}=\|\varphi\|_\infty
$$

For arbitrary $r\leq \frac{1}{2}$
$$
\fint_{B(0,r)}\partial^2_{xx} u_n dx dy = 0
$$
and for arbitrary $r>\frac{1}{2}$ we can estimate
$$
\begin{aligned}
\fint_{B(0,r)}|\partial^2_{xx} u_n| dx dy
&=
\frac{1}{\omega_2r^2}\int_{B(0,r)}|\partial^2_{xx} u_n |dx dy
\\
&\leq
\frac{1}{\omega_2}\int_{[-r,r]\times[-r,r]}|\partial^2_{xx} u_n| dx dy
\\
&=
\frac{1}{\omega_2}\int_{[-r,r]}|\varphi''(x)| dx \int_{[-r,r]} |\eta(ny)| dy
\\
&\leq
\frac{1}{\omega_2n}\|\varphi''\|_1 \|\eta\|_1
\end{aligned}
$$

Therefore

$$
M(\partial_{xx}u)\leq \frac{1}{\omega_2n}\|\varphi''(x)\|_1 \|\eta\|_1
$$
Overall 
$$
(M\partial^2_{xx}u_n)^{\frac{1}{2}}(Mu_n)^{\frac{1}{2}}\leq \frac{1}{\sqrt n}\sqrt{\frac{\|\varphi''\|_1 \|\eta\|_1}{\omega_2}}\|\varphi\|_\infty
$$
Note that the expression tends to $0$ as $n\to\infty$. Hence the inequality (\eqref{IMDE}) cannot hold.

\end{Example}

\section{Open problems}
\begin{Pro}
It was proved that for arbitrary r.i. spaces $X,Y$ the following estimate holds
$$
\|\partial^j_{x_i}u\|_{X^{k/j}Y^{1-k/j}}\leq C \|\partial^k_{x_i}u\|^{k/j}_X\|u\|^{1-k/j}_{Y},
$$
\end{Pro}
where the space on the left-hand side is the space given by Calderon-Lozanovski construction. Same construction can be done for an arbitrary Banach function space. The question is: Is the inequality still valid even for spaces that are not rearrangement-invariant?

\begin{Pro}
    It is likely that the following hypotheses holds. For arbitrary natural numbers $0<j<k$ and $u\in\mathcal{C}^{k}_c(\er^n)$, there exists a family of sets such that 
    $$
    \left|\frac{\partial^j u}{\partial x_i^j}(x)\right|^k\leq CA_{\mathcal{S}}\left(\frac{\partial^k u}{\partial x_i^k}\right)^{j}(x)A_{\mathcal{S}}(u)^{k-j}(x)
    $$
\end{Pro}

Note that it is enough to upgrade the one-dimensional lemma \ref{ODE} to the general case replacing $j+1$ by general natural $k>j$. However, this requires a brand new design of the intervals.


\begin{thebibliography}{9}

\bibitem{BS}
Colin Bennett and Robert~C. Sharpley.
\newblock {\em Interpolation of operators}.
\newblock Academic Press, 1988.

\bibitem{Gag}
Emilio Gagliardo.
\newblock Ulteriori propriet\`a di alcune classi di funzioni in pi\`u variabili.
\newblock {\em Ricerche Mat.}, 8:24, 1959.

\bibitem{Kal}
Agnieszka Ka{\l}amajska.
\newblock Pointwise multiplicative inequalities and {N}irenberg type estimates in weighted {S}obolev spaces.
\newblock {\em Studia Math.}, 108(3):275--290, 1994.

\bibitem{Lady}
Olga~A. Lady{\v{z}}enskaja.
\newblock Solution ``in the large'' to the boundary-value problem for the {N}avier--{S}tokes equations in two space variables.
\newblock {\em Soviet Physics. Dokl.}, 123(3):1128--1131 (427--429 {\em Dokl. Akad. Nauk SSSR}), 1958.

\bibitem{Landau1913}
Edmund Landau.
\newblock Einige {U}ngleichungen f{\"u}r zweimal differenzierbare {F}unktionen.
\newblock {\em Proceedings of the London Mathematical Society}, 13:43--49, 1913.

\bibitem{LRS2}
Karol Le{\'s}nik, Tom{\'a}{\v{s}} Roskovec, and Filip Soudsk{\'y}.
\newblock Gagliardo--{N}irenberg inequality via a new pointwise estimate.
\newblock {\em Journal of Functional Analysis}, 289(7):110996, 2025.

\bibitem{N}
John Nash.
\newblock Continuity of solutions of parabolic and elliptic equations.
\newblock {\em American Journal of Mathematics}, 80(4):931--954, 1958.

\bibitem{Ni}
Louis Nirenberg.
\newblock On elliptic partial differential equations.
\newblock {\em Annali della Scuola Normale Superiore di Pisa-Scienze Fisiche e Matematiche}, 13(2):115--162, 1959.

\end{thebibliography}
\end{document}